\newtheorem{theorem}{Theorem}[section]
\newtheorem{lemma}[theorem]{Lemma}
\newtheorem{proposition}[theorem]{Proposition}
\newtheorem{claim}[theorem]{Claim}
\theoremstyle{definition}
\newtheorem{definition}[theorem]{Definition}
\theoremstyle{remark}
\newtheorem{remark}[theorem]{Remark}
\theoremstyle{remark}
\theoremstyle{remark}
\theoremstyle{remark}
\renewcommand{\phi}{\varphi}
\begin{document}

\title[Inapproximability of actions]{Inapproximability of actions and \\ Kazhdan's property (T)}

\author{G\'abor Kun}
\address{G\'abor Kun, HUN-REN Alfréd Rényi Institute of Mathematics, Budapest, Hungary\\
and Eötvös Loránd University, Budapest, Hungary}
\email{kun.gabor@renyi.mta.hu}

\author{Andreas Thom}
\address{Andreas Thom, TU Dresden, Germany}
\email{andreas.thom@tu-dresden.de}

%\onehalfspace
\begin{abstract}
Let $\Gamma$ be a countable group with Kazhdan's property~$(T)$ and let
$\Delta$ be a finitely generated group. We prove that if a sofic
approximation of $\Gamma \times \Delta$ has the property that its spanning
subgraphs consisting of the $\Gamma$-labeled edges form an expander
sequence, then $\Delta$ is locally embeddable into finite groups (LEF).
Consequently, if $\Delta$ is not LEF, then every almost free p.m.p.\ action
of $\Gamma \times \Delta$ whose restriction to $\Gamma$ is ergodic fails
to weakly contain any sequence of finite labeled graphs; in particular,
such an action is not a local-global limit of finite graphs. We also prove
that, when $\Delta$ is amenable, the existence of an expander sofic
approximation of $\Gamma \times \Delta$ forces $\Delta$ to be LEF. The
main technical input is a self-improvement theorem for almost
automorphisms of expander sofic approximations of property~$(T)$ groups.
It implies that the Hamming-distance clusters of sufficiently accurate
almost automorphisms carry a natural finite group structure.
\end{abstract}

%\begin{abstract} We construct p.m.p.\ group actions that are not local-global limits of sequences of finite graphs. Moreover, they do not weakly contain any sequence of finite labeled graphs. Our methods are based on the study of almost automorphisms of sofic approximations: We show that the set of $\varepsilon$-automorphisms of a sufficiently good sofic approximation of a Kazhdan group by expanders form a group in a natural way.  \end{abstract}

\maketitle

\tableofcontents

\section{Introduction}

A sequence of finite labeled graphs is locally convergent if, for every
$r \in \mathbb N$, the isomorphism class of the rooted $r$-ball centered
at a uniformly chosen vertex converges in distribution. Local-global
convergence refines local convergence by recording the corresponding
colored neighborhood statistics for every finite vertex-coloring
\cites{bollobas,hatami}. A probability measure preserving action of a
finitely generated group, together with a finite generating set, determines
a labeled graphing. It is natural to ask whether such a graphing can be
obtained as a local-global limit of finite labeled graphs.

A finitely generated group is called \emph{sofic} if its labeled Cayley
graph admits a local approximation by finite labeled graphs. Sofic groups
were introduced by Gromov \cite{gromov}; see also Weiss \cite{weiss} and
the surveys \cites{pestov,icm}. Although the local structure of a sofic
approximation is prescribed by the Cayley graph, its global structure can
vary considerably. Schramm proved that sofic approximations of amenable
groups are hyperfinite \cite{schramm}. At the opposite end of the spectrum,
the first author proved that every sofic approximation of a Kazhdan group
is essentially a vertex-disjoint union of expander graphs
\cite{kunkazhdan}.

We build on this rigidity result in order to study sofic approximations of
direct products $\Gamma \times \Delta$. The main observation is that the
permutations defined by the $\Delta$-labels almost commute with the
$\Gamma$-labels and hence give almost automorphisms of the
$\Gamma$-labeled subgraphs. Expansion and Kazhdan's property~$(T)$ impose
strong restrictions on these almost automorphisms.

Recall that a group $\Lambda$ is {\rm LEF}, or locally embeddable into
finite groups, if, for every finite subset $F \subseteq \Lambda$, there
exist a finite group $H$ and an injective map
$\varphi \colon F \rightarrow H$ such that
$
\varphi(x)\varphi(y)=\varphi(xy)
$
whenever $x,y,xy \in F$. Every residually finite group is {\rm LEF}, and a
finitely presented group is {\rm LEF} if and only if it is residually
finite. Our first main result is the following.

\begin{theorem}~\label{main}
Let $\Gamma$ be a countable Kazhdan group and let $\Delta$ be a finitely
generated group. Let $S_{\Gamma}$ and $S_{\Delta}$ be finite generating
sets of $\Gamma$ and $\Delta$, respectively. Consider a sofic approximation
of $\Gamma \times \Delta$ with respect to the generating set
$S_{\Gamma} \cup S_{\Delta}$. If the spanning subgraphs consisting of the
edges labeled by elements of $S_{\Gamma}$ form an expander sequence, then
$\Delta$ is {\rm LEF}. In particular, if $\Delta$ is finitely presented,
then it is residually finite.
\end{theorem}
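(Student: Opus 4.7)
The plan is to use each generator of $\Delta$ as an $\varepsilon$-automorphism of the expander approximation of $\Gamma$, promote it via the main technical theorem of the paper to a bona fide element of a finite group, and then extract LEF from soficness.

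Denote the given sofic approximation of $\Gamma\times\Delta$ by $(V_n,(\pi_s^{(n)})_{s\in S_\Gamma\cup S_\Delta})$, where $\pi_s^{(n)}$ is the permutation of $V_n$ induced by the generator $s$. By hypothesis the $S_\Gamma$-labeled subgraph is an expander sofic approximation of $\Gamma$. Since $[s,t]=e$ in $\Gamma\times\Delta$ for all $s\in S_\Gamma$ and $t\in S_\Delta$, soficness gives
\[
\#\{v\in V_n : \pi_s^{(n)}\pi_t^{(n)}(v)\ne\pi_t^{(n)}\pi_s^{(n)}(v)\}=o(|V_n|),
\]
so each $\pi_t^{(n)}$ with $t\in S_\Delta$ is an $\varepsilon_n$-automorphism of the $S_\Gamma$-labeled expander for some $\varepsilon_n\to 0$.

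Next I would invoke the technical theorem advertised in the abstract: for $n$ large, the $\varepsilon$-automorphisms of the $n$-th expander form a natural (finite) group $A_n$. Let $H_n\le A_n$ be the subgroup generated by the classes of $\pi_t^{(n)}$ for $t\in S_\Delta$, and let $\phi_n\colon F(S_\Delta)\to H_n$ be the induced homomorphism. To establish LEF for $\Delta$ it suffices to prove that for every $r$ there exists $N(r)$ such that for all $n\ge N(r)$ the map $\phi_n$ descends to an injection on the radius-$r$ ball $B_r\subset\Delta$. Two consequences of soficness drive this: if $w_1=w_2$ in $\Delta$ then $\pi_{w_1}^{(n)}$ and $\pi_{w_2}^{(n)}$ agree on all but $o(|V_n|)$ vertices, which should place them in a single class of $A_n$; if $w_1\ne w_2$ in $\Delta$ then their Hamming distance tends to $1$, so they should remain distinct in $A_n$. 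With both verified, $\Delta$ is LEF, and the standard fact that finitely presented LEF groups are residually finite yields the final assertion.

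The principal difficulty is reconciling the two notions of closeness in play: the combinatorial one (Hamming distance on $V_n$) underlying soficness, and the algebraic one implicit in the group operation on $A_n$. The equivalence defining $A_n$ must be coarse enough to identify near-equal permutations (for well-definedness) yet fine enough to separate permutations differing on a positive fraction of vertices (for injectivity). This calibration should come directly from the construction of $A_n$, whose rigidity is powered by property (T) and expansion --- the very ingredients used to promote $\varepsilon$-automorphisms into group elements in the first place.
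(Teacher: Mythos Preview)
Your proposal is correct and follows essentially the same route as the paper: elements of $\Delta$ act as $o(1)$-almost automorphisms of the $S_\Gamma$-labeled expander via the commutation relations, the improvement theorem together with the expander dichotomy lemma organizes the $\delta$-almost automorphisms into a finite group of Hamming clusters, and soficness then makes the induced map from any finite ball of $\Delta$ into this cluster group both well-defined and injective. The calibration concern you flag in your final paragraph is precisely what the two lemmas preceding the paper's proof address (far-or-close dichotomy from expansion, and well-definedness of the product from the improvement step), so nothing further is needed.
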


\begin{remark}
By \cite{kunkazhdan}, for every sofic approximation of a Kazhdan group
$\Gamma$, the edges labeled by elements of $S_{\Gamma}$ induce,
essentially, a vertex-disjoint union of expander graphs. Theorem~\ref{main}
requires the stronger assumption that the resulting spanning subgraphs
form, essentially, a single expander sequence.
\end{remark}

The mechanism behind Theorem~\ref{main} is as follows. Since the two
factors commute in $\Gamma \times \Delta$, the permutations defined by
elements of $\Delta$ are almost automorphisms of the $\Gamma$-labeled
subgraphs of a sufficiently good sofic approximation. The composition of
two $\varepsilon$-almost automorphisms need only be a
$2\varepsilon$-almost automorphism, so the almost automorphisms at a fixed
error scale do not themselves form a group. We prove, however, that they
can be improved without moving them far in Hamming distance. Expansion
then implies that sufficiently accurate almost automorphisms split into
well-separated Hamming-distance clusters, and these clusters carry a
natural finite group structure. Given a finite subset of $\Delta$, the
corresponding elements determine distinct clusters, and multiplication is
preserved whenever the relevant products remain in the finite subset.
This produces the required partial embedding into a finite group.

As a consequence of Theorem~\ref{main}, we obtain p.m.p.\ actions which
cannot be approximated by finite labeled graphs in the local-global sense
of Hatami, Lov\'asz and Szegedy \cite{hatami}, based on the colored
neighborhood statistics of Bollob\'as and Riordan \cite{bollobas}. We use
weak containment in the one-sided local-global sense described in
Section~2, following the framework of \cite{kechris}.

\begin{theorem}\label{main2}
Let $\Gamma$ be a countable Kazhdan group and let $\Delta$ be a finitely
generated group which is not {\rm LEF}. Consider an almost free,
probability measure preserving action of $\Gamma \times \Delta$ on a
probability measure space such that the restriction of the action to
$\Gamma$ is ergodic. Then this action does not weakly contain any sequence
of finite labeled graphs. In particular, it is not a local-global limit of
finite graphs.
\end{theorem}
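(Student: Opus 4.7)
The plan is to argue by contradiction, using Theorem \ref{main} as a black box. Suppose a sequence of finite labeled graphs $(G_n)$ is weakly contained in the given action of $\Gamma\times\Delta$ on $(X,\mu)$. The first step is to extract from this a sofic approximation of $\Gamma\times\Delta$ with respect to $S_\Gamma\cup S_\Delta$. Almost freeness ensures that, for any $r$, the $r$-ball in the labeled Cayley graph of $\Gamma\times\Delta$ is visible from almost every point of $X$; weak containment then says that the rooted-neighborhood statistics of $(G_n)$ approximate the neighborhood statistics of these balls, so after a diagonal extraction the $(G_n)$ (possibly with a sublinear number of edges re-labeled) form a sofic approximation in the sense of the introduction.

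The second step is to upgrade the $S_\Gamma$-labeled subgraphs to a genuine expander sequence, so that the hypothesis of Theorem \ref{main} applies. Kun's theorem from \cite{kunkazhdan} tells us that, after removing an $o(|V(G_n)|)$ vertex set, the $S_\Gamma$-induced subgraph of $G_n$ is a vertex-disjoint union of expanders whose components correspond to the $\Gamma$-orbits in the sofic limit. I would then invoke ergodicity of $\Gamma\actson(X,\mu)$ to collapse this union to one expander: the function sending a vertex to the (normalized) size of its $\Gamma$-component is, in the local-sampling limit, a measurable $\Gamma$-invariant function on $X$, hence a.s.\ constant. This forces the distribution of component sizes in $G_n$ to concentrate, as $n\to\infty$, on a single atom, so after deleting $o(|V(G_n)|)$ further vertices one may assume the $S_\Gamma$-labeled subgraph of $G_n$ is a single expander.

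With these modifications $(G_n)$ is still a sofic approximation of $\Gamma\times\Delta$, but now its $S_\Gamma$-part is an expander sequence. Theorem \ref{main} then forces $\Delta$ to be LEF, contradicting the hypothesis, which proves that no such sequence $(G_n)$ can exist. The ``in particular'' clause is immediate, since local-global convergence is a strictly stronger notion than weak containment: the colored-neighborhood metric of Bollob\'as--Riordan refines the Benjamini--Schramm metric.

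The main obstacle will be the ergodicity step. Translating measure-theoretic ergodicity into the purely combinatorial statement that essentially one $\Gamma$-expander component swallows almost all of $V(G_n)$ is delicate, since a priori there could be many components of macroscopic but distinct densities, and the notion of ``the component a vertex lies in'' is not local, so does not directly appear as an observable of the Benjamini--Schramm limit. I expect the cleanest route is to fix a large radius $r$, use the expander mixing properties given by \cite{kunkazhdan} to show that components of density bounded away from $0$ are detectable from $r$-neighborhood statistics up to a small error, and then pass to an ultraproduct of $(G_n)$ to obtain a genuine measurable $\Gamma$-invariant partition of $X$ to which ergodicity can be applied. Once this is done, the rest of the argument is a direct appeal to Theorem \ref{main}.
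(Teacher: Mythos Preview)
Your overall architecture matches the paper's: argue by contradiction, observe that a weakly contained sequence is a sofic approximation of $\Gamma\times\Delta$, invoke \cite{kunkazhdan} to decompose the $S_\Gamma$-subgraph into expanders, reduce to a single expander, and then apply Theorem~\ref{main}. The divergence is entirely in the ``one component'' step.

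The paper does not use ergodicity directly; it uses \emph{strong} ergodicity. Since $\Gamma$ has Property~(T), the Connes--Weiss theorem (stated in Section~2) upgrades ergodicity of $\Gamma\actson X$ to strong ergodicity: there is $h>0$ with $\nu(sA\setminus A)>h\,\nu(A)$ for some $s\in S_\Gamma$ whenever $\nu(A)\le 1/2$. This is exactly a Cheeger inequality on $X$, and weak containment transfers it to the finite graphs: a $2$-coloring of $G_n$ by ``largest $S_\Gamma$-component'' versus ``the rest'' has zero $S_\Gamma$-boundary, so the matching measurable coloring of $X$ gives an almost $\Gamma$-invariant set of intermediate measure, contradicting strong ergodicity unless one component already carries $1-o(1)$ of the vertices. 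That is the whole argument---one line, no ultraproducts.

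Your route through plain ergodicity is where the genuine difficulty lies, and your sketch does not close it. First, even if the ``normalized component size'' were a well-defined $\Gamma$-invariant function on $X$ and hence a.e.\ constant, this would only force the component sizes to concentrate on a single value, not on a single component (two components of equal size survive that test). Second, weak containment only produces \emph{approximately} $\Gamma$-invariant sets in $X$, and ergodicity says nothing about those; passing to an ultraproduct of the $G_n$ gives you genuinely invariant sets, but in a space where ergodicity of $\Gamma\actson X$ is not known to hold. All of this evaporates once you notice that Property~(T) hands you strong ergodicity for free, so the obstacle you flag is not real in the presence of the Kazhdan hypothesis.
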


\begin{proof}[Proof of Theorem~\ref{main2}]
Suppose, toward a contradiction, that a sequence
 of finite
$S_{\Gamma} \cup S_{\Delta}$-labeled graphs $\{G_n\}_{n=1}^{\infty}$ is weakly contained in the
action. Taking the one-coloring in the definition of weak containment
shows that the uncolored rooted neighborhood statistics of $G_n$ converge
to those of the action. Since the action is almost free, these are the
rooted neighborhood statistics of the Cayley graph of
$\Gamma \times \Delta$. Hence $\{G_n\}_{n=1}^{\infty}$ is a sofic
approximation of $\Gamma \times \Delta$.

Restrict the labeling to $S_{\Gamma}$. By the main result of
\cite{kunkazhdan}, after changing $o(|V(G_n)|)$ labeled edges, the resulting
spanning subgraphs are vertex-disjoint unions of expanders with a uniform
expansion constant.

The restriction of the given action to $\Gamma$ is strongly ergodic,
because it is ergodic and $\Gamma$ is a Kazhdan group. We claim that one
of the $\Gamma$-components must contain almost all of the vertices.  After
discarding the remaining $o(|V(G_n)|)$ vertices and changing
$o(|V(G_n)|)$ labeled edges in order to restore the regular labeling, we
obtain a sofic approximation of $\Gamma \times \Delta$ whose
$\Gamma$-labeled subgraphs form an expander sequence. Theorem~\ref{main}
then implies that $\Delta$ is {\rm LEF}, which is a contradiction.
\end{proof}

When $\Gamma$ is infinite, the Bernoulli shift of
$\Gamma \times \Delta$ over a nontrivial probability space is almost free,
and its restriction to $\Gamma$ is mixing. It therefore gives a basic
example to which Theorem~\ref{main2} applies.

Theorem~\ref{main} concerns expansion of the subgraphs defined by the
$\Gamma$-labels. When $\Delta$ is amenable, expansion of the entire sofic
approximation already forces the same conclusion.

\begin{theorem}\label{product}
Let $\Gamma$ be a countable Kazhdan group and let $\Delta$ be a finitely
generated amenable group. If $\Gamma \times \Delta$ admits a sofic
approximation by a sequence of expander graphs, then $\Delta$ is {\rm LEF}.
\end{theorem}

The proof again begins with the expander decomposition of the
$\Gamma$-labeled subgraphs and then uses hyperfiniteness of the induced action of $\Delta$ on the set of components.

We finally describe the technical input used in the proof of
Theorem~\ref{main}. An $\varepsilon$-almost automorphism of a finite labeled
graph is a bijection which fails to preserve at most
$\varepsilon |V(G)|$ labeled edges. The graph of such a bijection is a
small-boundary subset of the product graph $G \times G$. The property~$(T)$
estimate from \cite{kunkazhdan} provides a nearby subset with arbitrarily
small boundary. 
Expansion also implies a Hamming-distance gap: two sufficiently accurate
almost automorphisms are either close or almost maximally far apart.
Consequently, closeness defines an equivalence relation on sufficiently
accurate almost automorphisms. Composition, followed by the improvement
procedure, defines a well-defined group operation on the equivalence
classes. The resulting finite group of clusters is constructed in
Section~4 and is the finite group used in the proof of
Theorem~\ref{main}.

The use of expansion and Kazhdan's property~$(T)$ to control almost
symmetries of finite permutation models is closely related to work of
Alekseev and the second author. In
\cite{AlekseevThomUniversalSofic}, related control of centralizers in
universal sofic groups is used to prove that there are
$2^{\aleph_0}$ non-isomorphic universal sofic groups, confirming a
conjecture of Thomas \cite{thomas}. Related permanence properties of
approximation and stability under quotients are studied in
\cite{AlekseevThomApproxStability}.

\medskip

This paper is organized as follows. In Section~2 we recall the basic
notions used throughout the paper. Section~3 contains the main technical
input, namely the improvement theorem for almost automorphisms of expander
sofic approximations of Kazhdan groups. In Section~4 we construct the
finite group of clusters of almost automorphisms and prove
Theorem~\ref{main}. Finally, Section~5 proves Theorem~\ref{product}.

\medskip

This paper was mostly written in 2018 and a first version appeared without Section \ref{sec:product} on the arXiv in January 2019, but was never submitted for publication. The authors spoke about the results of Section \ref{sec:product} on various occasions over the last years. We finally finished writing it by mid July 2026. On August 1, 2026, we learned about the breakthrough result of OpenAI. The proof of their key Proposition 2.3 in \cite{OpenAI26} is a creative and effective application of the results and techniques of this paper and the ones in \cite{kunkazhdan}.

\section{Definitions}

Throughout, a graph is denoted by $G=(V(G),E(G))$, where $V(G)$ is the set of vertices and $E(G)$ the set of undirected edges. 
We work with sequences $\{G_n\}_{n=1}^{\infty}$ of finite, undirected, $d$-regular graphs. The edge boundary of $S \subseteq V(G)$ is
defined to be $\partial(S)=E(S,V(G) \setminus S)$. The Cheeger constant of the graph $G$ is 
$$h(G) = \min_{S \subseteq V(G), |S| \leq |V(G)|/2} |\partial S|/|S|.$$ 
We say that a sequence $\{G_n\}_{n=1}^{\infty}$ of finite, undirected, $d$-regular graphs is an expander sequence if there exists an $h>0$ such that $h(G_n) \geq h$
holds for $n \in \mathbb N$. 

\vspace{0.1cm}

Let $\Gamma$ denote a group generated by the finite symmetric set $S=S^{-1} \subset \Gamma$. We will consider $|S|$-regular graphs 
with an edge-labeling, where every edge will be labeled by an element of $S$ in a directed way, and the label of $(x,y)$ will be the inverse of the
label of $(y,x)$ for any pair of vertices. We say that an $|S|$-regular graph $G$ is regularly $S$-labeled if for every vertex $x$ and $s \in S$ there 
exists a unique vertex $y$ such that $(x,y)$ is an edge labeled by $s$, and we write $y=sx$. We say that a sequence $\{G_n\}_{n=1}^{\infty}$ is a 
sofic approximation of $\Gamma$ if it is a sequence of regularly $S$-labeled graphs and for every $s_1, \dots ,s_k \in S$ and $\varepsilon>0$ if $s_1 \dots s_k = 1$
then $x=s_1 \dots s_k x$ holds for all but at most an $\varepsilon$-proportion of the vertices if $n$ is large enough, while if $s_1 \dots s_k \neq 1$ then $x \neq s_1 \dots s_k x$ holds for all but 
at most an $\varepsilon$-proportion of the vertices if $n$ is large enough.

\vspace{0.1cm}

We say that a group is residually finite if it can be embedded into the direct product of finite groups. Residually finite groups are sofic. We say that a group $\Gamma$ is LEF (Locally Embeddable into Finite groups)  if for any finite subset $F \subseteq \Gamma$ there exists a finite group $H$ and an {injective }mapping $\varphi: F \rightarrow H$ such that $\varphi(x) \varphi(y) = \varphi(xy)$ holds if $x,y, xy \in F$. It is easy to see that a group is LEF if it is a subgroup of an ultraproduct of finite groups. Every finitely presented LEF group is residually finite.

\vspace{0.1cm}

We will study probability measure preserving (p.m.p.\ for short) almost free actions of groups on probability measure spaces. 
A p.m.p.\  action of a group $\Gamma$ generated by a finite set $S$ on a probability measure space $X$ is ergodic if for any measurable $A \subseteq X, \nu(A) \neq 0,1$ 
there exists $s \in S$ such that $\nu(sA \setminus A)>0$. It is strongly ergodic if there exists an $h>0$ such that for any measurable $A \subseteq X, \nu(A) \leq 1/2$ there exists $s \in S$ such that $\nu(sA \setminus A)>h\nu(A)$. By a result of Connes and Weiss, a group has Kazhdan's Property (T) if every ergodic action of the group is strongly ergodic.
However, we will use a different definition of Kazhdan's property (T) using the terminology of \cite{kunkazhdan}. We say that the finitely generated group $\Gamma$ 
has Kazhdan Property (T) if there is a finite set of generators $S$ and a Kazhdan constant $\kappa>0$, such that for every Hilbert space $\mathcal{H}$ and 
$\pi\colon \Gamma \rightarrow U(\mathcal{H})$ a unitary representation of $\Gamma$, either $\pi$ has a non-zero invariant vector or for $A = \sum_{s \in S} \pi(s)/|S|$ the inequality 
$\|A\xi \| \leq (1-\kappa) \| \xi \|$ holds for every $\xi \in \mathcal{H}$. See the book of Bekka, de la Harpe and Valette  \cite{bachir} for more on Property (T).

 We will define the notion of weak containment in the spirit of Kechris \cite{kechris} for sequences of (labeled) graphs. 
Given an integer $r$ and a graph $G$ we will consider the following probability distribution on isomorphism classes of rooted graphs:
Pick a vertex uniformly at random and consider the isomorphism class of the rooted $r$-ball centered and rooted at $x$. Given a finite set of colors we can extend this to vertex-colored graphs and we call this the colored $r$-neighborhood statistics of the graph following Bollob\'as and Riordan \cite{bollobas}. An almost free p.m.p.\  action of a group $\Gamma$ generated by a finite set $S$ on a probability measure space $X$ weakly contains a sequence of finite $S$-regularly labeled graphs $\{G_n\}_{n=1}^{\infty}$ if for every $k,r$ and $\varepsilon>0$ there exists $N$, such that if $n \geq N$ then for any $k$-coloring of $G_n$ there exists a measurable coloring of $X$ that is $\varepsilon$-close to the colored $r$-neighborhood statistics of the finite colored graph $G_n$.

\section{Improving almost automorphism}

We introduce the notion of $\varepsilon$-automorphism for some $\varepsilon>0$, this is a key notion that we will use in the sequel.

\begin{definition}
Let $(V,E)$ be a finite edge-labeled graph $(V,E)$ and $\varepsilon>0$. A {bijection} $c \colon V \rightarrow V$ is called an $\varepsilon$-almost automorphism, if there are at most
$\varepsilon|V|$ edges, whose image under $c$ is not an edge with the same label.
\end{definition}
Note that automorphisms are exactly the $0$-almost automorphisms. It is tempting to study the set of $\varepsilon$-automorphisms of a finite graph appearing in a sofic approximation of a group $\Gamma$. However, the product of two $\varepsilon$-automorphisms is in general only a $2\varepsilon$-automorphism and that is a problem that renders this idea rather useless unless there is some mechanism that allows us to \emph{improve} the resulting $2\varepsilon$-automorphisms. This is exactly what the main theorem of this section achieves under additional assumptions on $\Gamma$.

%\begin{lemma} \label{lem1}
%Let $f \colon V(G) \to V(G)$ be a bijection of the $d$ regular graph $G$ and $F \subset V(G) \times V(G)$ the graph of $f$. Let $M$ denote the Markov operator of $G \times G$.
%If $\|(M-{\rm id}) \chi_{F} \| \leq (2\varepsilon)^{\frac{1}{2}}/d \|\chi_{F}\|$ then $F$ is the graph of an $\varepsilon$-almost automorphism. 
%If $F$ is the graph of an $\varepsilon$-almost automorphism then $\|(M-{\rm id}) \chi_{F} \| \leq (2\varepsilon/d)^{\frac{1}{2}} \|\chi_{F}\|$. 
%\end{lemma}

%\begin{definition}
%Let $S$ be a finite symmetric subset of a group. A properly $S$-edge labeled graph is an $|S|$-regular undirected graph that has a directed $S$ edge labeling such that the label of $(x,y)$ is the inverse of the label of $(y,x)$ for every edge $(x,y)$, and for every vertex $x$ there exists another (unique) vertex $y$ such that $(x,y)$ is an edge labeled by $s$: We use the notation
%$y=sx$.  
%\end{definition}

%\begin{remark}
%Note that we do not use the group structure too much: We only need the information which pairs in $S$ are inverses of each other.
%\end{remark} 

Let $G$ denote a finite, simple, regularly $S$-labeled graph and let $b \colon V(G) \rightarrow V(G)$ be a map. The graph of $b$ will be denoted by 
$B = \{ (x,b(x)) : x \in V(G) \} \subseteq V(G) \times V(G)$. Note that $G \times G$ is also a regularly $S$-labeled graph.
\begin{lemma}
Then, $b$ is an $\varepsilon$-automorphism if and only if $|\partial B| \leq 2\varepsilon |V(G)|$. \end{lemma}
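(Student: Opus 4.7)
The plan is to translate both conditions into the same combinatorial quantity, namely the count of pairs $(x,s) \in V(G) \times S$ for which $b(sx) \neq sb(x)$, and then identify exactly where the factor of $2$ in the statement comes from.

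First I would unpack what it means for $b$ to fail to be an $\varepsilon$-automorphism on a given edge. The undirected edge $\{x, sx\}$ of $G$, labeled $s$ when oriented from $x$, is mapped by $b$ to the pair $\{b(x), b(sx)\}$; this is an edge with the same label precisely when $b(sx) = sb(x)$. A bookkeeping observation is that each bad undirected edge $\{x, sx\}$ of $G$ is witnessed twice in the sum $\sum_{x \in V(G)} |\{s \in S : b(sx) \neq sb(x)\}|$: once at the endpoint $x$ via label $s$ and once at $sx$ via label $s^{-1}$ (using that $b(sx) \neq sb(x)$ is equivalent to $b(x) \neq s^{-1} b(sx)$). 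Hence the number of bad undirected edges equals $\tfrac{1}{2}\sum_{x} |\{s : b(sx) \neq sb(x)\}|$, so $b$ is an $\varepsilon$-automorphism if and only if this sum is at most $2\varepsilon|V(G)|$.

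Next I would compute $|\partial B|$ directly in $G \times G$, which carries the diagonal $S$-labeling: the edge labeled $s$ leaving $(x, b(x)) \in B$ ends at $(sx, sb(x))$, and this endpoint lies in $B$ if and only if $sb(x) = b(sx)$. Thus the directed edges out of $B$ that leave $B$ are in bijection with pairs $(x,s)$ with $b(sx) \neq sb(x)$. Crucially, each undirected edge in $\partial B$ has by definition exactly one endpoint in $B$, so it is counted exactly once by this directed count; therefore $|\partial B| = \sum_{x} |\{s \in S : b(sx) \neq sb(x)\}|$.

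Combining the two displays gives $|\partial B| = 2 \cdot (\text{number of bad edges of } b)$, from which the equivalence follows immediately. I do not expect a genuine obstacle; the only point to be careful about is the asymmetry in the two factors of $2$: for edges of $G$ each bad edge is witnessed twice (once per endpoint, by labels $s$ and $s^{-1}$), whereas for edges of $\partial B$ each is witnessed only once because only one endpoint is in $B$. That discrepancy is precisely what produces the factor $2$ in the statement.
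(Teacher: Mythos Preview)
Your proof is correct and takes essentially the same approach as the paper: both establish that each bad $s$-edge $\{x,sx\}$ of $G$ corresponds to exactly two edges of $\partial B$, namely the $s$-edge leaving $(x,b(x))$ and the $s^{-1}$-edge leaving $(sx,b(sx))$, and your intermediate count $\sum_x |\{s: b(sx)\neq sb(x)\}|$ is just an explicit bookkeeping of this same bijection. The one point the paper makes explicit that you leave tacit is the use of simplicity of $G$ (so $x\neq sx$), which is what guarantees the two pairs $(x,s)$ and $(sx,s^{-1})$ are genuinely distinct in your double count.
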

\begin{proof}
For every $s$-labeled edge $(x,y) \in E(G)$ the followings are equivalent: 

\begin{enumerate}

\item
{$(b(x), b(y))$ is not an $s$-labeled edge (maybe not even an edge) of $V(G)$.}

\item
{$s(x,b(x)) \notin B$}

\item
{$s^{-1}(y,b(y)) \notin B$}

\end{enumerate}

This gives a one-to-one correspondence between the bad edges of the bijection $b$ and pairs of edges on the boundary of $\partial B$. 
Here we use that $G$ is simple, hence $x \neq y$. The lemma follows.
\end{proof}

We will use the structural results of the first author about the sofic approximation of Kazhdan groups.  The main result of \cite{kunkazhdan} is that every sofic approximation of a countable Kazhdan group is essentially a disjoint union of expander graphs. However, we will need a specific proposition that will be relevant in the study of the expansion of small sets, in particular the graph of an almost automorphism. 

\begin{proposition}[see \cite{kunkazhdan}] \label{prop} 
Let $\Gamma$ be a finitely generated Kazhdan group with finite and symmetric generating set $S$, where $1 \in S$, and Kazhdan constant $\kappa$.
For every $\alpha>0$ there exists an integer $r$ such that for any finite, $S$-edge labeled graph $G$ and $T \subseteq V(G)$, where the  ball $B(t,r)$ is isomorphic to the $r$-ball in the Cayley graph of $\Gamma$ for every $t$ in $T$, there exists a set $U$ such that $$|\partial U | \leq \alpha |U| \quad \mbox{and} \quad | U \triangle T | \leq \frac{10}{\kappa^2} \|\chi_T - M\chi_T \|^2 \leq \frac{5 |\partial T|}{d\kappa^2}.$$
\end{proposition}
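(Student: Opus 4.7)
The plan is to apply Kazhdan's property $(T)$ to a limiting unitary representation of $\Gamma$ obtained from the local Cayley structure around $T$, deduce that $\chi_T$ is $\ell^2$-close to a $\pi$-invariant vector, and then extract $U$ by thresholding the level sets of this vector.

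First I would introduce the normalized averaging operator $(M\xi)(x) = |S|^{-1}\sum_{s \in S}\xi(sx)$ on $\ell^2(V(G))$. A direct discrete-Laplacian computation gives $\|\chi_T - M\chi_T\|^2 \leq |\partial T|/(2d)$ up to absolute constants, which already establishes the rightmost inequality in the proposition. The obstruction to invoking Kazhdan's property directly is that $\Gamma$ does not globally act on $V(G)$: the labels define only a partial action, and the defining relations of $\Gamma$ hold only inside radius-$r$ balls around $T$. I would overcome this by passing to an ultralimit of instances $(G_n, T_n)$ with $r_n \to \infty$; on ultralimits of vectors essentially supported near the $T_n$, the relations of $\Gamma$ hold exactly, so one obtains a genuine unitary representation $\pi$ of $\Gamma$ on an ultraproduct Hilbert space into which the ultralimit $\chi_T^{\omega}$ of the $\chi_{T_n}$ embeds.

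Next I would decompose $\chi_T^{\omega} = \xi + \eta$ with $\xi$ in the $\pi$-invariant subspace and $\eta$ in its orthogonal complement. The Kazhdan property, applied on the complement of the invariants, yields $\|M\eta\| \leq (1-\kappa)\|\eta\|$, hence $\|(I-M)\eta\|\geq \kappa\|\eta\|$; since $(I-M)\xi=0$ this gives $\|\eta\|^2 \leq \kappa^{-2}\|(I-M)\chi_T^{\omega}\|^2$, so $\xi$ is $\ell^2$-close to $\chi_T^{\omega}$. To extract $U$, I would apply the coarea formula to a $[0,1]$-valued truncation of $\xi$: the level sets $U_\theta := \{x : \xi(x) \geq \theta\}$ satisfy $\int_0^1 |\partial U_\theta|\,d\theta \leq C \|(I-M)\xi\|\cdot \sqrt{|V(G)|}$ via Cauchy-Schwarz. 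A pigeonhole in $\theta$ then produces some level for which $|\partial U_\theta| \leq \alpha|U_\theta|$, while choosing $\theta$ close to $1/2$ simultaneously yields the Chebyshev-type bound $|T\triangle U_\theta| \leq 4\|\chi_T^{\omega} - \xi\|^2$. Transferring back through the ultralimit gives the stated bound with constant $10/\kappa^2$ once one tracks both effects.

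The main obstacle is executing the ultralimit cleanly, so that Kazhdan's property applies to a genuine unitary representation of $\Gamma$ and the conclusion survives descent back to a finite approximating graph $G$; this is where the assumption that all of the balls $B(t,r)$ are \emph{exactly} isomorphic to the $r$-ball in the Cayley graph is indispensable. A secondary technical point is that a single $\theta$ must achieve both the expansion bound and the symmetric-difference bound simultaneously; pigeonholing over an interval of admissible $\theta$ accounts for the numerical constant $10$ rather than the sharper $4$ coming from the Chebyshev step in isolation.
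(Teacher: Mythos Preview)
The paper does not contain a proof of this proposition: it is quoted from \cite{kunkazhdan} and used as a black box in the proof of the subsequent theorem. There is therefore no proof in the present paper to compare your proposal against.

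That said, your outline follows the expected shape of such an argument --- use the spectral gap from property~(T) to show that $\chi_T$ is $\ell^2$-close to an (almost) invariant vector, then threshold level sets of that vector to produce $U$ --- and the rightmost inequality is indeed just the edge--Laplacian identity. Two points deserve care. First, the proposition is a statement about a \emph{single} graph $G$ (given $\alpha$, some $r$ works uniformly for all admissible $G$ and $T$), so your ultralimit needs to be framed as a proof by contradiction: assume no $r$ works, take counterexamples $(G_n,T_n)$ with $r_n=n$, and contradict in the limit. You hint at this but do not say it. Second, the descent step --- extracting an honest subset $U_n\subseteq V(G_n)$ with small boundary from a level set of an invariant vector living in an ultraproduct Hilbert space --- is more delicate than your sketch suggests; level sets of ultralimit vectors are not automatically (ultralimits of) subsets of the $V(G_n)$. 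The usual workaround, and what makes the integer $r$ appear naturally, is to stay finitary throughout: iterate $M$ a bounded number of times (depending on $\alpha$ and $\kappa$) on $\chi_T$, observe that on vectors supported where the $r$-balls are Cayley-like these iterates behave exactly as in $\ell^2(\Gamma)$, and threshold $M^k\chi_T$ directly. Your coarea-plus-pigeonhole step then goes through without any limiting procedure.
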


We will use this proposition in order to prove our main technical result.
The Hamming distance of two permutations will be denoted by 
$$d_{\rm H}(\sigma,\tau) := |\{1 \leq i \leq n \mid \sigma(i) \neq \tau(i)\}|.$$

\begin{theorem} Let $\Gamma$ be a finitely generated Kazhdan group with a fixed finite and symmetric generating set $S$ and Kazhdan constant $\kappa$. There exists $C,\varepsilon_0>0$ depending only on $S$ and $\kappa$, such that for all $0 \leq \varepsilon\leq \varepsilon_0$ the following holds:

Let $(G_n)_{n=1}^{\infty}$ be a sofic approximation by a sequence of regularly $S$-labeled
%\footnote{It seems a minor effort to work it out for any notion of sofic approximation by graph sequence, it depends on the definition we use. 
%E.g. loops can be removed by easy surgery if$1 \notin S$.} 
expander graphs and for each $n \in \mathbb N$ let $c_n \colon V(G_n) \to V(G_n)$ be an $\varepsilon$-almost automorphism. Then for every $\delta>0$ and $n \in \mathbb N$ large enough, there is a $\delta$-almost automorphism $c'_n\colon V(G_n) \to V(G_n)$ such that 
$$d_{\rm H}(c_n,c'_n) \leq \varepsilon C|V(G_n)|.$$
\end{theorem}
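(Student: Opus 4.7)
My plan is to represent $c_n$ by its graph $B_n := \{(x, c_n(x)) : x \in V(G_n)\} \subseteq V(G_n) \times V(G_n)$, where the product carries the diagonal $S$-labeling $s \cdot (x,y) = (sx, sy)$. By the preceding lemma, $|\partial B_n| \leq 2\varepsilon |V(G_n)|$ in this product graph. A routine check shows that $G_n \times G_n$ under the diagonal action is itself a sofic approximation of the Kazhdan group $\Gamma$, and that the $r$-ball around $(x, y)$ in the product matches the Cayley $r$-ball of $\Gamma$ whenever the $r$-balls of $x$ and $y$ individually do in $G_n$---which holds for all but an $o_n(1)$-fraction of vertices by soficity. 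This puts Proposition~\ref{prop} at my disposal as the main technical tool.

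First I will apply Proposition~\ref{prop} to $B_n$, restricted to the subset of good vertices, inside $G_n \times G_n$ with a parameter $\alpha > 0$ to be chosen depending on $\delta$. This produces a set $U \subseteq V(G_n) \times V(G_n)$ with $|\partial U| \leq \alpha |U|$ and $|U \triangle B_n| \leq C_0 \varepsilon |V(G_n)|$, where $C_0 = O(1/(|S|\kappa^2))$ depends only on $S$ and $\kappa$. The set $U$ is the candidate from which the improved almost automorphism will be extracted.

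Next I turn $U$ into a function $c'_n$. Setting $f(x) := \{y : (x, y) \in U\}$, a quick count shows $|\{x : |f(x)| \neq 1\}| = O(\varepsilon) |V(G_n)|$: since $\sum_x |f(x)| = |U|$ lies within $C_0 \varepsilon |V(G_n)|$ of $|V(G_n)|$ and at least $(1 - C_0 \varepsilon)|V(G_n)|$ rows are nonempty (because $|B_n \cap U|$ is that large), the exceptional rows total $O(\varepsilon) |V(G_n)|$. I define $c'_n(x) = c_n(x)$ whenever $(x, c_n(x)) \in U$; for the remaining $x$ I set $c'_n(x)$ to be any element of $f(x)$ when that fiber is nonempty, and $c_n(x)$ as a fallback. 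Then $\{x : c'_n(x) \neq c_n(x)\} \subseteq B_n \setminus U$, so $d_{\rm H}(c_n, c'_n) \leq C_0 \varepsilon |V(G_n)|$, supplying the constant $C := C_0$ demanded by the theorem.

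The main obstacle will be to verify that $c'_n$ is a $\delta$-almost automorphism for arbitrarily small $\delta$. A case analysis of the bad $s$-edges of $c'_n$ (splitting on whether $(x, c'_n(x)) \in U$, whether $s \cdot (x, c'_n(x)) \in U$, and whether $|f(sx)| = 1$) yields $|\partial B'_n| \leq \alpha |U| + O(\varepsilon/\kappa^2) |V(G_n)|$, where $B'_n$ is the graph of $c'_n$. Choosing $\alpha$ small (and $n$ correspondingly large, so that Proposition~\ref{prop} applies with this $\alpha$) kills the first term, but the ``conversion loss'' of order $\varepsilon/\kappa^2$ persists: this direct route only shows that $c'_n$ is $O(\varepsilon)$-almost, which suffices when $\delta$ exceeds a constant multiple of $\varepsilon/\kappa^2$ but not for arbitrarily small $\delta$. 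To handle every $\delta > 0$ I would iterate the Kazhdan improvement at the level of sets, applying Proposition~\ref{prop} to $U$ itself to produce a sequence $U = U_0, U_1, U_2, \ldots$ with $|\partial U_k| \leq \alpha_k |U_k|$ for $\alpha_k \to 0$ fast enough that $\sum_k |U_{k+1} \triangle U_k|$ is summable, so that the limit is essentially $\Gamma$-invariant while staying within Hamming distance $O(\varepsilon)|V(G_n)|$ of $B_n$. Exploiting that the $G_n$ are expanders---so that any essentially $\Gamma$-invariant subset of $V(G_n) \times V(G_n)$ of size close to $|V(G_n)|$ is forced to be essentially the graph of a single equivariant function---the final extraction from $U_\infty$ incurs no further conversion loss. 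Controlling this interplay between iterated boundary reduction and the set-to-function conversion, while keeping the cumulative Hamming displacement bounded by $\varepsilon C |V(G_n)|$ independently of $\delta$, is the delicate step on which the argument hinges.
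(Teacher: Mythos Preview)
Your setup---passing to the graph $B_n \subseteq V(G_n)\times V(G_n)$, recognizing the diagonal product as a sofic approximation of $\Gamma$, and invoking Proposition~\ref{prop} to produce $U$ with $|\partial U|\le\alpha|U|$ and $|U\triangle B_n|\le C_0\varepsilon|V(G_n)|$---matches the paper exactly. The divergence is at the conversion step, and this is where you miss the key idea.

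You bound the bad fibers $\{x:|f(x)|\neq 1\}$ via $|U\triangle B_n|$, which gives $O(\varepsilon)|V(G_n)|$ and forces your iteration. The paper instead bounds the bad fibers via $|\partial U|$ and the Cheeger constant $h$ of the $G_n$. The mechanism is this: if $x$ has $|f(x)|=k$ and $sx$ has $|f(sx)|=k'<k$, then at least $k-k'$ of the points $(x,y)\in U$ satisfy $s\cdot(x,y)\notin U$; summing over all edges and all thresholds gives
\[
\sum_{k\ge 0}\bigl|E_{G_n}\bigl(\pi_1^{-1}([0,k]),\,\pi_1^{-1}([k+1,\infty))\bigr)\bigr|\;\le\;|\partial U|\;\le\;(\alpha+o(1))|V(G_n)|.
\]
Since $|U\triangle B_n|$ is small, $|\pi_1^{-1}(1)|\ge |V(G_n)|/2$, so each level set on the ``wrong'' side has size at most $|V(G_n)|/2$ and the Cheeger bound applies termwise. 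One obtains
\[
|\pi_1^{-1}(0)|+\sum_{k\ge 2}(k-1)\,|\pi_1^{-1}(k)|\;\le\;\frac{(\alpha+o(1))}{h}\,|V(G_n)|,
\]
and likewise for $\pi_2$. Hence turning $U$ into the graph of a bijection costs only $O(\alpha/h)|V(G_n)|$ vertices and $O(\alpha d/h)|V(G_n)|$ boundary edges. Choosing $\alpha$ small (depending on $\delta$ and $h$) makes the result a $\delta$-almost automorphism in \emph{one} step; the Hamming displacement is $|U\triangle B_n|+O(\alpha/h)|V(G_n)|$, whose first term supplies the $\varepsilon C$ bound.

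Your proposed iteration is therefore unnecessary, and in fact your final sentence---``exploiting that the $G_n$ are expanders\ldots the extraction from $U_\infty$ incurs no further conversion loss''---is precisely this argument, applicable already to $U_0=U$ since $|\partial U|\le\alpha|U|$ for $\alpha$ as small as you like. Once you see that the bad-fiber count is controlled by $|\partial U|/h$ rather than by $|U\triangle B_n|$, the whole iteration collapses to a single application of Proposition~\ref{prop}.
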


\begin{proof}
We may assume that the graphs are regularly $S$-labeled.
We choose $\varepsilon_0$ and $C$ later. 
Consider the sofic approximation $G_n \times G_n$ of $\Gamma$ given as the product of the sofic approximations and let $F_n \subset V(G_n) \times V(G_n)$ denote the graph of $c_n$. In order to improve $F_n$ we apply Proposition \ref{prop} to $\alpha$ small enough chosen later 
in order to get an $r$ for which the conditions of the proposition hold.
Let $L_n^r \subseteq V(G_n) \times V(G_n)$ denote the set of vertices whose 
$r$-neighborhood is isomorphic to the rooted $r$-ball in the Cayley graph of $\Gamma$. 
The set $L_n^r$ is a product set: 
$L_n^r = K_n^r \times K_n^r$, where $K_n^r$ denotes the set of vertices in $V(G_n)$ whose
$r$-neighborhood is isomorphic to the rooted $r$-ball in the Cayley graph of $\Gamma$. 
The sequence  $(G_n)_{n=1}^{\infty}$ is a sofic approximation, hence $$|V(G_n) \setminus K_n^r| = o_n(|V(G_n)|).$$
Since $c_n$ is a bijection $|F_n \setminus L_n^r|=o(|V(G_n)|)$. 
Set $T_n = F_n \cap L_n^r$ and apply Proposition \ref{prop}: We get a set $U_n$ such that 
$$|\partial U_n| \leq \alpha |U_n| =  (\alpha+o(1)) |V(G_n)|$$ and
$$|T_n \triangle U_n| \leq \frac{5}{d \kappa^2} |\partial T_n| \leq  \frac{10\varepsilon}{d \kappa^2}|V(G_n)|.$$

Finally, we need to modify $T_n$ in order to get the graph of a $\delta$-almost automorphism. 
For every $x \in V(G)$ define $\pi_1(x)=|\{ (x,y): y \in V(G), (x,y) \in U_n  \}|$  Otherwise I do not see the next inequality in terms of $|\partial U_n|$, and $\pi_2$ similarly. 
Note that $$\sum_{k=0}^{\infty} |E(\pi_1^{-1}([0,k]),\pi_1^{-1}([k+1,\infty)))| \leq |\partial U_n| \leq (\alpha+o(1)) |V(G_n)|.$$
Assume that $\alpha$ is small enough and $n$ is large enough so that $|T_n \triangle U_n| \leq |V(G_n)|/2$, hence
$|\pi_1^{-1}(1)|, |\pi_2^{-1}(1)| \geq |V(G_n)|/2$. Let $h$ denote the Cheeger constant of the expander sequence $(G_n)_{n=1}^{\infty}$.
Since $\pi_1^{-1}([0,k]) \geq |V(G_n)|/2$ for $k \geq 2$ and $|\pi_1^{-1}(0)| \leq |V(G_n)|/2$ we can conclude that 
$$|\pi_1^{-1}(0)| + \sum_{k \geq 2} (k-1) |\pi_1^{-1}(k)| \leq (\alpha+o(1)) |V(G_n)|/h.$$ The same inequality holds for $\pi_2$.

{These two deficiencies are exactly the row and column defects of the bipartite relation $U_n \subset V_n \times V_n$.}
Hence we need to add and remove at most  $(4\alpha+o(1)) |V(G_n)|/h$ vertices in order to get the graph of a bijection. 
The boundary changed by at most $(4\alpha+o(1))d|V(G_n)|/h$. If $\alpha$ is small and $n$ is large enough then
this will be the graph of a $\delta$-almost automorphism $c'_n \colon V(G_n) \to V(G_n)$.

\end{proof}

\section{The group of clusters of almost automorphisms}

Consider the symmetric group ${\rm Sym}(n)$ for $n \in \mathbb N$.
Given a set of permutations $S=\{f_1, \dots ,f_d \} \subset {\rm Sym}(n)$ define the $2d$-regular undirected graph
$G_S$ with possible loops and multiple edges on $V(G_S)=\{1, \dots ,n\}$ and with $E(G_S)$ the disjoint union of the graphs of the maps $f_i$.
The following idea of using expansion originates in the work of Simon Thomas \cite{thomas}.

\begin{lemma}
Consider a set of permutations $S=\{f_1, \dots ,f_d \} \subset {\rm Sym}(n)$ and let $\delta>0$. Moreover, consider two $\delta$-automorphisms of $G_S$, $c,c' \colon V(G_S) \to V(G_S)$.
Then, we have $$d_{\rm H}(c,c') \leq  \frac{2\delta n}{h(G_S)} \quad \mbox{or} \quad n-d_{\rm H}(c,c') \leq  \frac{2\delta n}{h(G_S)}.$$
\end{lemma}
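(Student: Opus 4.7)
The plan is to use the dichotomy from the Cheeger constant by showing that the edge boundary of the ``agreement set'' of $c$ and $c'$ is controlled by the combined defect of the two almost automorphisms.

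First I would introduce the sets
$$A = \{v \in V(G_S) : c(v) = c'(v)\}, \qquad B = V(G_S) \setminus A,$$
so that $|B| = d_{\rm H}(c,c')$ and $|A| = n - d_{\rm H}(c,c')$. The goal is then to prove the edge boundary estimate
$$|E(A,B)| \leq 2\delta n.$$
The key observation is a ``propagation'' claim: if an edge $e = (u,v)$ with label $f_i$ (i.e.\ $v = f_i(u)$) is good for both $c$ and $c'$ in the sense of the definition of $\delta$-almost automorphism, and if $u \in A$, then $v \in A$. Indeed, goodness at $e$ means $c(v) = f_i(c(u))$ and $c'(v) = f_i(c'(u))$; combined with $c(u) = c'(u)$ this forces $c(v) = c'(v)$, so $v \in A$.

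By contraposition, every edge of $G_S$ joining $A$ to $B$ must be bad for $c$ or for $c'$. Since $c$ and $c'$ are each $\delta$-almost automorphisms, the number of edges bad for $c$ is at most $\delta n$ and similarly for $c'$, so
$$|E(A,B)| \leq 2\delta n.$$
Now I would apply the Cheeger inequality in the two symmetric cases. If $|B| \leq n/2$, then by definition of $h(G_S)$,
$$h(G_S)\,|B| \leq |\partial B| = |E(A,B)| \leq 2\delta n,$$
giving $d_{\rm H}(c,c') = |B| \leq 2\delta n / h(G_S)$. Otherwise $|A| \leq n/2$, and the same argument applied to $A$ yields $|A| \leq 2\delta n / h(G_S)$, i.e.\ $n - d_{\rm H}(c,c') \leq 2\delta n / h(G_S)$.

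There is no serious obstacle in this argument; the only subtle point is the propagation claim, which relies crucially on the fact that the edge label determines the other endpoint (since each $f_i$ is a function). This is why the hypothesis that labels come from permutations (rather than an abstract undirected labeled structure) is what makes the proof go through cleanly. The rest is a direct application of the Cheeger inequality.
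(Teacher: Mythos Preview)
Your proof is correct and follows essentially the same approach as the paper: define the agreement set $A$, show that every boundary edge must be bad for $c$ or $c'$, and then apply the Cheeger inequality to the smaller of $A$ and its complement. Your write-up is in fact slightly more careful than the paper's about the edge-boundary bound; the only point worth making explicit is that the propagation claim also works in the reverse orientation (if $v\in A$ and $v=f_i(u)$ then $u\in A$), which follows immediately from the injectivity of $f_i$ and is exactly what your final remark about permutations is alluding to.
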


\begin{proof}
Set $A=\{ x \in V(G_S) \mid c(x)=c'(x)\}$. Note that if $x \in A$ and there is an $1 \leq i \leq d$ that $f_i(x) \notin A$ holds then $c$ or $c'$ does not map $(x,f_i(x))$
to the graph of the permutation $f_i$. Hence the number of such elements $x$ in $A$ is at most $2\delta n$. On the other hand, if $|A| \leq n/2$ then we can use the bound on the number of edges between $A$ and $A^c$ to get that this number is at least $h(G_S)|A|$.
Since $|A|=n-d_{\rm H}(c,c')$ this implies the second inequality.
If $|A|>n/2$ we get the first inequality applying the same bound to $A^c$.
\end{proof}

The previous lemma implies that \emph{having small Hamming distance} defines an equivalence relation on $\delta$-almost automorphisms if $\delta$ is small enough.
The next lemma shows that if %there is a $\delta$-almost automorphism close to every $2\delta$-almost automorphism 
{any $2\delta$-almost automorphism is close to a $\delta$-almost automorphism} 
then we can define a group structure on these equivalence classes in a natural way.

\begin{lemma}
Consider a set of permutations $S=\{f_1, \dots ,f_d \} \subset {\rm Sym}(n)$ and $\delta>0$. Assume that

\begin{enumerate}

\item
{for every $2\delta$-almost automorphism $c$ of $G_S$ there exists a $\delta$-almost automorphism $\alpha(c)$ such that $d_{\rm H}(c,\alpha(c)) \leq n/5$,}

\item
{and for any two $\delta$-almost automorphisms $c,c' \colon V(G_S) \to V(G_S)$, we have $d_{\rm H}(c,c') \notin [n/5, 4n/5]$.
}

\end{enumerate}

Then, we can define a group $\Gamma$ in the following way: The elements of $\Gamma$ are the  equivalence classes of $\delta$-almost automorphisms, where two almost automorphisms are equivalent if their Hamming distance is at most $n/5$. Given two $\delta$-almost automorphisms $c,c' \colon V(G_S) \to V(G_S)$ representing their class define the product of the two classes as the class of $\alpha(c c') \colon V(G_S) \to V(G_S)$. Then, the elements of $\Gamma$ and the binary product are well defined and give a group structure.
\end{lemma}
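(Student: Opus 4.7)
The plan is to verify four things in turn: that the ``Hamming distance at most $n/5$'' relation is an equivalence relation on $\delta$-almost automorphisms; that the composition $c \circ c'$ of two representatives is a $2\delta$-almost automorphism, so $\alpha(cc')$ is defined; that the product on equivalence classes is independent of the chosen representatives; and that the resulting operation admits an identity and inverses and is associative. The single mechanism driving every step is that hypothesis (2) creates a gap $[n/5, 4n/5]$ in the Hamming-distance spectrum, which upgrades a weak triangle-inequality bound $d_{\rm H} \leq 4n/5$ to the sharp conclusion $d_{\rm H} < n/5$. I treat the $\delta$-almost automorphisms as bijections throughout, which is the relevant case since those produced by the previous theorem are bijections.

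For the equivalence relation, reflexivity and symmetry are immediate; transitivity is the first instance of the gap mechanism, since $d_{\rm H}(c_1,c_2), d_{\rm H}(c_2,c_3) \leq n/5$ gives $d_{\rm H}(c_1,c_3) \leq 2n/5$ by the triangle inequality, which by (2) forces $d_{\rm H}(c_1,c_3) < n/5$. For composition, an edge $e$ mapped badly by $c \circ c'$ either has $c'(e)$ not a same-label edge (at most $\delta n$ such $e$) or $c'(e)$ is a same-label edge that $c$ then maps badly; since $c'$ is a vertex bijection, and hence a bijection on ordered pairs of vertices, the latter count is bounded by the number of bad edges of $c$, also at most $\delta n$. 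Hence $c \circ c'$ is a $2\delta$-almost automorphism and $\alpha(cc')$ is a well-defined $\delta$-almost automorphism.

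For well-definedness of the product, if $c_1 \sim c_2$ and $c_1' \sim c_2'$, the bijectivity of $c_1$ and of $c_2'$ allows the Hamming distances to pass through the compositions:
\[
d_{\rm H}(c_1 c_1', c_2 c_2') \leq d_{\rm H}(c_1 c_1', c_1 c_2') + d_{\rm H}(c_1 c_2', c_2 c_2') = d_{\rm H}(c_1', c_2') + d_{\rm H}(c_1, c_2) \leq 2n/5,
\]
and combining with $d_{\rm H}(\alpha(c), c) \leq n/5$ from (1) applied to $c_1 c_1'$ and $c_2 c_2'$ yields $d_{\rm H}(\alpha(c_1 c_1'), \alpha(c_2 c_2')) \leq 4n/5$, which by (2) sharpens to $<n/5$. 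The group axioms follow in the same spirit: the identity map is a $0$-almost automorphism and $\alpha(\id \cdot c) \sim c$ by (1), so $[\id]$ is neutral; $c^{-1}$ is a $\delta$-almost automorphism because a vertex bijection induces a bijection on ordered pairs of vertices, from which a counting argument yields that $c$ and $c^{-1}$ have the same number of same-label edges sent to non-same-label pairs; and associativity reduces to comparing $\alpha(\alpha(c_1 c_2) c_3)$ with $\alpha(c_1 \alpha(c_2 c_3))$ via a fourfold triangle inequality whose pieces each contribute at most $n/5$, giving $4n/5$ that sharpens to $<n/5$. The main obstacle is this recurring sharpening: every naive bound lands exactly at $4n/5$, the upper endpoint of the forbidden interval, so the argument closes only because hypothesis (2) is stated with a closed interval.
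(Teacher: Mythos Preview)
Your proof is correct and follows essentially the same route as the paper's: the gap hypothesis (2) is used repeatedly to sharpen triangle-inequality bounds of $2n/5$ or $4n/5$ down to $< n/5$, first for transitivity, then for well-definedness of the product, then for associativity. You are in fact more careful than the paper in a couple of places: you explicitly verify that the composition of two $\delta$-almost automorphisms is a $2\delta$-almost automorphism (so that $\alpha$ applies), and that the inverse of a $\delta$-almost automorphism is again one; the paper's proof tacitly assumes both of these.
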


\begin{proof}
The defined binary relation on $\delta$-almost automorphisms is an equivalence relation: It is clearly reflexive and symmetric. If $a,b,c$
are $\delta$-almost automorphisms and $d_{\rm H}(a,b), d_{\rm H}(b,c) \leq n/5$ then by the triangle inequality $d_{\rm H}(a,c) \leq 2n/5$,
and by $(2)$ we get $d_{\rm H}(a,c) \leq n/5$. This proves transitivity.

Now we show that the product is well defined. Let $a_1, a_2, b_1, b_2$ be $\delta$-almost automorphisms, assume that
$d_{\rm H}(a_1, a_2) \leq n/5$ and $d_{\rm H}(b_1, b_2) \leq n/5$. Then $$d_{\rm H}(\alpha(a_1  b_1),\alpha(a_2  b_2)) \leq 4n/5$$
by $(1)$ and the triangle inequality, hence it can be at most $n/5$ by $(2)$ as required.

The class of the identity will be the identity of $\Gamma$ and the inverse of a class will be the class of the inverses of its elements. Finally, we need to prove associativity. Given three $\delta$-automorphisms $a, b, c$ we suffice to show
$$d_{\rm H}\big(\alpha(a \alpha(b c)),\alpha(\alpha(a b) c) \big) \leq 4n/5.$$
We know that $d_{\rm H} \big((a b) c,\alpha(\alpha(a  b) c) \big) \leq 2n/5$, and similarly \\ 
$d_{\rm H} \big(a (b c), \alpha(a \alpha(b  c)) \big) \leq 2n/5$.
Since $a (b c)=(a b) c$ this gives the required bound. The lemma follows.
\end{proof}

\begin{proof}[Proof of Theorem~\ref{main}]
Consider a sofic approximation of $\Gamma \times \Delta$ and a finite subset $F \subseteq \Delta$.
Assume that the edges of the approximating graph sequence are labeled by elements of $S_{\Gamma} \cup S_{\Delta}$, 
where  $S_{\Gamma} \subset \Gamma, S_{\Delta} \subset \Delta$ are finite subsets. Assume that the sofic approximation of $\Gamma$ 
by the induced subgraphs containing the edges labeled by elements of $S_{\Gamma}$ is an essentially expander sequence.

Let $\delta >0$ small and $n$ large enough such that the clusters of $\delta$-almost automorphisms form a group, where two 
$\delta$-almost automorphisms are in the same cluster if their Hamming distance is at most a fifth of the size of the vertex set.
Moreover, let $n$ be large enough (and hence the "error of the sofic approximation" small enough) such that the elements of $FF$ 
are all in different clusters, and for any $x, y \in F$ the cluster of $xy$ is the product of the clusters of $x$ and $y$ (in the group of clusters). 
We know that the group of clusters is finite. Since this holds for any finite $F \subseteq \Delta$ the group $\Delta$ is LEF.
\end{proof}

\begin{remark} It is a very interesting problem to classify or understand almost subgroups of ${\rm Sym}(n)$, such as the almost centralizers of expander sofic approximations of Kazhdan groups arising from our results.
In this context, it seems to be an open problem\footnote{{After circulation of the first draft of this paper, this question was answered affirmatively by Becker and Chapman \cite{BeckerChapman}. They proved uniform flexible
stability in permutations for all amenable groups.}} to decide whether every uniform almost homomorphism from a finite group to ${\rm Sym}(n)$ is uniformly close to a homomorphism to say ${\rm Sym}((1+\varepsilon)n)$. However, the analogous question for ${\rm U}(n)$ (equipped with the normalized Hilbert-Schmidt distance) has been answered in \cites{MR3733361,MR3867328}, which already gives some information since ${\rm Sym}(n) \subset {\rm U}(n)$ in a way compatible with the metrics.
\end{remark}

\section{The proof of Theorem \ref{product}} \label{sec:product}

We will us the following basic lemma on LEF groups.

\begin{lemma}\label{LEF}
If $\Delta$ is a finitely generated group and
$\Delta_0 \leq \Delta$ is a finite index LEF subgroup, then
$\Delta$ is LEF.
\end{lemma}

\begin{proof}
{Let
$
M= \bigcap_{\delta\in \Delta} \delta \Delta_0 \delta^{-1}.
$
Since $\Delta_0$ has finite index in $\Delta$, the subgroup $M$ is normal
and of finite index in $\Delta$. Moreover, $M\leq \Delta_0$, hence $M$ is
LEF.

By the Kaloujnine--Krasner embedding theorem \cite{KK51}, the group
$\Delta$ embeds into the wreath product
$
M \wr (\Delta/M)
$
with finite quotient $\Delta/M$. By Gordon--Vershik \cite{GV97}, the wreath product of an LEF group with a finite group is LEF.
Thus, $M\wr(\Delta/M)$ is LEF. It
follows that $\Delta$ is LEF.}
\end{proof}

We can assume that the finite set of generators of $\Gamma \times \Delta$ is the union of the set of generators $S_{\Gamma}$ of $\Gamma$ and the set of generators $S_{\Delta}$ of $\Delta$.
Consider a sofic approximation of $\Gamma \times \Delta$ by a sequence of expander graphs. We know that the spanning subgraphs corresponding to the edges with labels in $\Gamma$ are essentially a disjoint union of expander graphs. Hence, by changing a small number of these edges we get a sofic approximation of $\Gamma \times \Delta$, whose restriction to the $\Gamma$-labeled edges is a disjoint union of expander graphs. We will refer to these expanders as $\Gamma$-components. 

Our next goal is to change the sofic approximation in order to get a new one such that every $\Gamma$-component has the same size, moreover, the equivalence relation of being in the same $\Gamma$-component is compatible with the action of the elements in $\Sigma_{\Delta}$. We call an edge $(x,y)$ labeled by $\delta \in S_{\Delta}$ {\it good} if for more than half of the vertices in the $\Gamma$-component of $x$ the endvertex of the unique edge labeled by $\delta$ and starting at that vertex is in the $\Gamma$-component of $y$, and $(x,y)$ is called {\it bad} otherwise. We will change the sofic approximation such that every edge with label in $S_{\Delta}$ is good. Note that this implies by connectivity that all the $\Gamma$-components have equal size. We need the following claims.

\begin{claim}
Consider a sofic approximation of $\Gamma \times \Delta$ such that all the
$\Gamma$-components are expanders. Then the proportion of the bad edges goes to zero.  
\end{claim}
\begin{proof}
Given the $\Gamma$-components $C, D$ and the element $\delta \in S_{\Delta}$ let $C' \subset C$ denote the set of vertices, such that the edge labeled by $\delta$ starting at at these vertices ends in $D$. If these edges are bad then $|C'| \leq |C|/2$, hence $|\partial_{\Gamma} C'| \geq h |C|$, where $\partial_{\Gamma}$ denotes the edge boundary in the spanning subgraph with the set of edges with labels in $\Gamma$. Note that for every bad edge $(x,y)$ labeled by $\delta$, where $x \in C', y \in D$, and edge in $\partial_{\Gamma} C'$ labeled by $\gamma \in S_{\Gamma}$ and starting at $x$, the unique walk of length four starting at $y$ and labeled by the commutator of $\delta$ and $\gamma$ does not end at $y$, since it ends in a component different from $D$. Thus, every edge on the boundary $\partial_{\Gamma} C'$ contributes to the error of the sofic approximation. Since this holds for every $\delta \in S_{\Delta}$ and $C,D$ we get by summation that this error is at least the Cheeger constant times the number of the bad edges. The claim follows by the definition of sofic approximation.
\end{proof}

Given a vertex $x$ let $C_x$ denote its $\Gamma$-component.

\begin{claim}
For every $\varepsilon>0$ the proportion of the good edges $(x,y)$ such that $|C_x|>(1+\varepsilon)|C_y|$ goes to zero. 
\end{claim}
\begin{proof} 
Note that if $(x,y)$ is a good edge and $|C_x|>(1+\varepsilon)|C_y|$, then there are at least $|C_x|-|C_y|>\varepsilon |C_x|$ bad edges with the same label starting in $C_x$, namely the edges whose other endvertex is not in $C_y$. Summing this up for every $\Gamma$-component yields the claim, since the proportion of the bad edges goes to zero. 
\end{proof}

\begin{claim}
There is a sofic approximation of $\Gamma \times \Delta$ such that all the $\Gamma$-components are expanders and there are no bad edges.
\end{claim}
\begin{proof}
Consider a sofic approximation such that the $\Gamma$-components are expanders. First, we show that the size of most $\Gamma$-components is close to be equal.
Let $m$ denote the median size of the $\Gamma$-components, that is, the positive integer such that at least half of the vertices are contained by a $\Gamma$-component of size at least $m$, and at least half of the vertices are contained by a $\Gamma$-component of size at most $m$. We prove for any $\varepsilon>0$ that the proportion of vertices whose $\Gamma$-component has more than $(1+\varepsilon)m$ or less than $(1-\varepsilon)m$ vertices goes to zero.

Let $0<\varepsilon'<\frac{1}{2}$. We might assume that the proportion of edges $(x,y)$ such that $|C_x|>(1+\varepsilon')|C_y|$ is at most $\frac{h\varepsilon'}{2}$. Then for every $m'<m$ if $\{x: |C_x| \leq m'\}$ contains at least an $\varepsilon'$ proportion but at most half of the vertices then $|\{x: |C_x| \leq (1+\varepsilon')m'\}|>(1+h/2d)|\{x: |C_x| \leq m'\}|$, since 
$|\partial_{\Delta} \{x: |C_x| \leq m'\}| \geq h |\{x: |C_x| \leq m'\}|$, and at least half of the edges goes to vertices in $\Gamma$-components of size at most $(1+\varepsilon')m'$. Thus, by iteration we get that if we choose $m'$ such that $\{x: |C_x| \leq m'\}$ contains an $\varepsilon$ proportion of the vertices and $k$ such that $(1+h/2d)^k>\frac{1}{2\varepsilon}$ then $\{x: |C_x| \leq (1+\varepsilon')^k m'\}$ contains at least half of the vertices, hence  $m'\geq (1+\varepsilon')^{-k} m$.
Since this holds for every $\varepsilon'>0$, we obtain that for every $\varepsilon>0$ asymptotically all but an $\varepsilon$ proportion of the vertices is contained by $\Gamma$-components of size at least $(1-\varepsilon)m$. The proof of the upper bound is similar.  

Moreover, we know that the proportion of bad edges go to zero. We can remove a small number of vertices and possibly change a small number of edges in order to end up at every $\Gamma$-component having the same size. In order to add the missing $\Delta$-edges first we consider the graph whose vertices are the components, and two are adjacent (with a labeled edge) if there are good edges connecting them: we extend this to an $S_{\Delta}$ labeled graph. Finally, we add edges between two $\Gamma$-components if they are connected in the graph of components, the additional edges get the same label. This completes the proof of the claim.
\end{proof}

We distinguish two cases. First, assume that the number of $\Gamma$-components is constant (for an infinite subsequence), denote it by $k$. Let $C$ denote one of the $\Gamma$-components. Consider the action of the free group $F_{S_{\Delta}}$ on the $\Gamma$-components: this is well-defined, since all the edges are good.
Note that the set of elements in $F_{S_{\Delta}}$ fixing $C$ (as a set) is a subgroup of index $k$, whose generators represent elements of $\Delta$ inducing a subgroup $\Delta_0 \leq \Delta$ of index $k$. We can assume that we get the same subgroup $\Delta_0 \leq \Delta$ for every graph in the sofic approximation. 
Altogether, we get a sofic approximation of $\Gamma \times \Delta_0$ on $C$, that is an expander with respect to $S_{\Gamma}$. Thus, by Theorem \ref{main} the group $\Delta_0$ is LEF, and by Lemma \ref{LEF} the group $\Delta$ is LEF too.

Second, assume that the number of $\Gamma$-components is unbounded; after
passing to a subsequence we may assume that it tends to infinity.
Let $q_n$ be the number of $\Gamma$-components of $G_n$, and let $m_n$ be their
common size. Thus,
$
        |V(G_n)| = q_n m_n$ and $
        q_n\to \infty.$
Consider the graph $Q_n$ whose vertices are the $\Gamma$-components of $G_n$.
For $\delta\in S_\Delta$ we put a $\delta$-edge from $C$ to $D$ if the
$\delta$-edges starting in $C$ end in $D$. This is well-defined since there are no
bad edges.

{
Given a vertex $x$ and a positive integer $k$ let $p_{2k}(x)$ denote the probability that a random walk starting at $x$ using edges in $S_{\Delta}$ returns to $x$. For the Cayley graph of $\Delta$ the Kesten criteria for amenability implies for every $\lambda<1$ that $p_{2k}(x)^{1/2k}>\lambda$ if $k$ is large enough. This also holds for every vertex $x \in V(G_n)$ if $B(x,k)$ is isomorphic to the $k$-ball in the Cayley graph of $\Delta$. There are such vertices by sofic approximation (in fact, their proportion goes to one). 

Note that for every $x \in V(G_n)$ the probability of return for its $\Gamma$-component $C_x$, as a vertex of $Q_n$, can not be smaller than the probability of return for $x$: if a walk corresponding to a word in $\Delta$ returns to $x$ then the walk corresponding to the same word starting at $C_x$ should also return. Thus, for every $\lambda>0$ if $k$ is large enough and also $n$ depending on $k$ and the sofic approximation is large enough then we can find a vertex $y \in V(Q_n)$ such that $p_{2k}(y)^{1/2k}>\lambda$ holds. 

On the other hand, for an expander sequence there exists $\lambda<1$ such that $p_{2k}(x)^{1/2k}<\lambda$ holds for every vertex $x$ if the number of vertices is large enough, that is, at least an exponentially large function of $k$. 
Hence the sequence $(Q_n)_n$ can not be an expander sequence. This completes the proof of the theorem.}

%We claim that the sequence $(Q_n)_n$ is hyperfinite. Indeed, the corresponding measured equivalence relation in the limit is induced by an action of $\Delta$ and hence amenable. Thus, it is
%hyperfinite by Connes--Feldman--Weiss \cite{connesfeldmanweiss}. Now, by a result of Schramm \cite[Theorem 1.1]{schramm}, the sequence $(Q_n)_n$ is hyperfinite. 

%For every $\eta>0$ there exists
%$K\in \mathbb N$ and a set of edges %$R_n\subseteq E(Q_n)$ such that
%\[
%        |R_n|\leq \eta |V(Q_n)|
%\]
%for all sufficiently large $n$, and every connected component of
%$Q_n\setminus R_n$ has at most $K$ vertices.
%Hence, we can choose a union
%$A_n$ of such components such that
%\[
 %       \frac14 |V(G_n)| \leq |A_n| \leq \frac12 |V(G_n)|
%\]
%for all sufficiently large $n$, where $A_n$ has small boundary. This contradictions expansion of $G_n$ and shows, that the second case cannot occur. This completes the proof of the theorem.
%}

\section*{Acknowledgments}

The work of the authors has received funding from the European Research Council (ERC) under the European Union’s Horizon 2020 research and innovation programme (Consolidator Grant No. 681207, Consolidator Grant No. 617747 and Advanced Grant No. 741420) by the Hungarian Academy of Sciences Momentum Grant no. 2022-58.

%\section{Space for additional text}

\end{document}